\definecolor{Gray}{gray}{0.9}
\newcommand{\e}{\mathop{e}}
\newcommand{\Id}{\mathop{I\mkern-2.5mu{d}}}
\newcommand{\bigO}{\mathcal{O}}
\newcommand{\suchthat}{\;\ifnum\currentgrouptype=16 \middle\fi|\;}
\newcommand{\until}[1]{\{1,\dots, #1\}}
\newcommand{\subscr}[2]{#1_{\textup{#2}}}
\newcommand{\setdef}[2]{\{#1 \; | \; #2\}}
\newcommand{\map}[3]{#1: #2 \rightarrow #3}
\newcommand{\real}{\mathbb{R}}
\newcommand{\scirc}{\raise1pt\hbox{$\,\scriptstyle\circ\,$}}
\newcommand\oprocendsymbol{\hbox{$\square$}}
\newcommand\oprocend{\relax\ifmmode\else\unskip\hfill\fi\oprocendsymbol}
\newcommand{\myclearpage}{\clearpage}
\renewcommand{\myclearpage}{}
\newcommand{\Iinfty}{I_{\infty}}
\newcommand{\sign}{\operatorname{sign}}
\renewcommand{\baselinestretch}{0.98}
\DeclareSymbolFont{bbold}{U}{bbold}{m}{n}
\DeclareSymbolFontAlphabet{\mathbbold}{bbold}
\newcommand{\vect}[1]{\mathbbold{#1}}
\newcommand{\vectorzeros}[1][]{\vect{0}_{#1}}
\newcommand{\ds}{\displaystyle}
\newcommand{\myemph}[1]{\emph{#1}}
\newtheorem{theorem}{Theorem}
\newtheorem{example}[theorem]{Example}
\newtheorem{remark}[theorem]{Remark}
\newcommand{\jac}[1]{D\mkern-2.5mu{#1}}
\newcommand{\jacfrac}[2]{\frac{\partial{#1}}{\partial{#2}}}
\newcommand{\WP}[2]{\left\llbracket{#1}, {#2}\right\rrbracket}
\newcommand{\GB}{Gr\"onwall\xspace}
\newcommand{\seminorm}[1]{{\left\vert\kern-0.25ex\left\vert\kern-0.25ex\left\vert #1 
		\right\vert\kern-0.25ex\right\vert\kern-0.25ex\right\vert}}
\newcommand{\semimeasure}[1]{\mu_{\seminorm{\cdot}}\kern-0.5ex\left(#1\right)}
\newcommand{\inprod}[2]{\langle\!\langle{#1},{#2}\rangle\!\rangle}
\newcommand{\norm}[2]{\|#1\|_{#2}}
\newcommand{\zero}{\operatorname{zero}}
\newcommand{\fixed}{\operatorname{fixed}}
\DeclareSymbolFont{bbold}{U}{bbold}{m}{n}
\DeclareSymbolFontAlphabet{\mathbbold}{bbold}
\newcommand*{\mydoi}[1]{\href{http://dx.doi.org/#1}{\includegraphics[width=.75em]{doi.png}}}
\title{From Contraction Theory to Fixed Point Algorithms\\ on Riemannian and
  Non-Euclidean Spaces}
\author{Francesco Bullo, Pedro Cisneros-Velarde, Alexander Davydov, Saber
  Jafarpour\thanks{Center of Control, Dynamical Systems and Computation,
    University of California, Santa Barbara, 93106-5070, USA. This work was
    supported in part by the Defense Threat Reduction Agency under Contract
    No.~HDTRA1-19-1-0017. Figures 1 and 2 are licensed under the CC BY-SA
    4.0. {({\tt bullo@ucsb.edu, pacisne@gmail.com, \{davydov,
        saber\}@ucsb.edu})} }}
\begin{document}
\maketitle

\begin{abstract}
  The design of fixed point algorithms is at the heart of monotone operator
  theory, convex analysis, and of many modern optimization problems arising
  in machine learning and control.  This tutorial reviews recent advances
  in understanding the relationship between Demidovich conditions,
  one-sided Lipschitz conditions, and contractivity theorems. We review the
  standard contraction theory on Euclidean spaces as well as little-known
  results for Riemannian manifolds. Special emphasis is placed on the
  setting of non-Euclidean norms and the recently introduced weak pairings
  for the $\ell_1$ and $\ell_\infty$ norms. We highlight recent results on
  explicit and implicit fixed point schemes for non-Euclidean contracting
  systems.
\end{abstract}

\thispagestyle{empty}
\pagestyle{empty}
\section{Introduction}
Motivated by control, optimization, and machine learning applications, this
document provides a simplified and incomplete tutorial about the main
contraction theorem and resulting fixed point algorithms.  The combination
of contraction theory and fixed point algorithms originates in the classic
ground-breaking paper by Desoer and Haneda~\cite{CAD-HH:72}; these ideas
play a central role in numerical integration of differential
equations~\cite{EH-GW:96}.


The importance of fixed point strategies in modern day data science is
described in the recent review~\cite{PLC-JCP:21}. \cite{EKR-SB:16} is a
recent survey on monotone operators and their application to convex
optimization. In this paper, we argue that contraction theory for vector
fields is the continuous-time equivalent of these theories.  Indeed,
strongly monotone operators and gradient vector fields of strongly convex
functions are strongly contracting vector fields, modulo a sign change.  A
central problem in these fields is the design of efficient fixed point
algorithms; recent contributions in this spirit
are~\cite{PMW-JJES:20,MR-RW-IRM:20}.

Of special interest in this paper are contracting systems in non-Euclidean
spaces, i.e., vector fields whose flow is a contraction mapping with
respect to a non-Euclidean norm.  In this context, Aminzare and Sontag were
the first to highlight the connections between contraction theory and
semi-inner products in~\cite{ZA-ES:13,ZA-EDS:14b}.

This tutorial is based upon the theory of weak pairings recently developed
in~\cite{AD-SJ-FB:20o,SJ-AD-FB:20r,SJ-AD-AVP-FB:21f}.  We remark
that monotone operators over smooth semi-inner product spaces are studied
for example in \cite{NKS-RNM-CN-SN:14}; here we are precisely interested in
nonsmooth polyhedral norms, such as the $\ell_1$ and $\ell_\infty$
norms. For the same reason (lack of differentiability), contraction theory
over Finsler manifolds does not directly apply to the non-Euclidean
problems of interest here.

This document also briefly reviews some generalizations to Riemannian
manifolds.  Contraction theory on Riemannian manifolds originates in the
influential work by Lohmiller and Slotine~\cite{WL-JJES:98}. A formal
coordinate-free analysis (with connection to monotone operators) is given
in \cite{JWSP-FB:12za}.  In the differential geometry literature, the study
of geodesically monotonic vector fields initiated in \cite{SZN:99b} and
relevant extensions were obtained in
\cite{JXDCN-OPF-LRLP:02,JHW-GL-VMM-CL:10}.

This document is intended to be a tutorial and makes the following
contributions. First, we provide a unified view of the main theorem on
contraction and incremental stability in the context of Euclidean,
Riemannian and non-Euclidean spaces. Similarly, we present a unified
investigation into fixed point algorithms over these three domains. Second,
we consider the setting of strongly contracting vector fields with respect
to non-Euclidean norms: we analyze and establish convergence factors for
the explicit Euler (from~\cite{SJ-AD-AVP-FB:21f}), explicit
extragradient, and implicit Euler algorithms.  Notably, these results
provide a starting point for the generalization of convex analysis and
monotone operator theory to the setting of strongly contracting vector
fields with respect to the norms $\ell_1$ and $\ell_\infty$.  Finally, we
include a number of conjectures that will hopefully stimulate further
research.
 

\myclearpage
\subsection*{A brief review of matrix measures}
We recall the standard $\ell_p$ induced norms, $p\in\{1,2,\infty\}$:
\begin{gather*}
  \norm{A}{2} = \sqrt{ \subscr{\lambda}{max}(A^\top A)  }, \\
    \norm{A}{1} = \max_{j\in\until{n}} \sum_{i=1}^n |a_{ij}|,  \quad
  \norm{A}{\infty} = \max_{i\in\until{n}} \sum_{j=1}^n |a_{ij}|.
\end{gather*}
where $\subscr{\lambda}{max}(A^\top A)$ is the largest eigenvalue of
$A^\top A$.  The \myemph{matrix measure} of $A \in \real^{n \times n}$ with
respect to a norm $\norm{\cdot}{}$ is
\begin{equation}
  \mu(A) := \lim_{h \to 0^+} \frac{\|I_n + hA\| - 1}{h}.
\end{equation}
From~\cite{CAD-HH:72} we recall $\mu_2(A)=\tfrac{1}{2}\subscr{\lambda}{max}\big(A+A^\top\big)$,
\begin{align*}
  \mu_1(A) \! &= \!\! \max_{j\in\until{n}}
  \Big( a_{jj} + \!\!\! \sum_{i=1,i\neq j}^n \!|a_{ij}| \Big),\enspace
  \mu_\infty(A)=\mu_1(A^\top).
\end{align*}
For $R$ invertible square, we define $\norm{A}{p,R}=\norm{RA}{p}$ and its
associated matrix measure $\mu_{p,R}(A) =\mu_p(RAR^{-1})$.  For
$P=P^\top\succ0$, we write
$\norm{x}{P}^2=\norm{x}{2,P^{1/2}}^2=x^\top{P}x$.  Matrix measures enjoy
numerous properties~\cite{CAD-HH:72}; we present here only the so-called
Lumer's equalities:
\begin{subequations} \label{eq:Lumer-equalities}
  \begin{align}
  \mu_{2,P^{1/2}}(A) &= \max_{\norm{x}{2,P^{1/2}} = 1} x^\top PA x   \label{eq:Lumer-lemma} \\
   &= \min\setdef{b\in\real}{A^\top P + PA\preceq 2b P}.
  \end{align}
\end{subequations}

\myclearpage
\section{Contraction and Monotone Operators on the Euclidean Space $(\real^n,\ell_2)$}

We start with a very simple motivating discussion.  For $b\in\real$,
$\map{f}{\real}{\real}$ is \myemph{one-sided Lipschitz (osL)} if
\begin{align} 
  & (x-y)(f(x)-f(y)) \leq b (x-y)^2, \quad &&\forall x,y \\
  &  \iff \quad f(x)-f(y) \leq b (x-y) , \quad &&\forall x>y 
\end{align}
and if $f$ is continuously differentiable
\begin{align} 
  &  \iff \quad      f'(x) \leq b,  && \forall x \label{eq:dosL}
\end{align}
We refer to~\eqref{eq:dosL} as \myemph{differential one-sided Lipschitz
  bound (d-osL)}. We note that
\begin{itemize}
\item $f$ is osL with $b=0$ if and only if $f$ weakly decreasing;
\item if $f$ is Lipschitz with bound $\ell$, then $f$ is osL
  with $b=\ell$, whereas  the converse is false;
\item finally, for the scalar dynamics $ \dot{x} = f(x)$, the \GB lemma
  implies $|x(t)-y(t)|\leq \e^{bt} |x(0)-y(0)|$.
\end{itemize}
In what follows, we generalize this simple discussion in numerous
directions and study its implications.

\myclearpage\subsection{Contraction and Incremental Stability}

For a continuously differentiable $\map{f}{\real^n}{\real^n}$, consider
\begin{equation}
  \dot{x} = f(x).
\end{equation}
We next state the main theorem of contraction and exponential incremental stability.
\begin{theorem}[Equivalences on $(\real^n,\ell_2)$]
  \label{thm:contraction+IS-euclidean}
  For $P=P^\top\succ0$ and $c>0$, the following statements are equivalent:
\begin{enumerate}
\item\label{equiv:ell2:osL}  $(f(x) - f(y))^\top P (x - y)
  \leq -c \|x - y\|^2_{2,P^{1/2}}$, for all $x,y$;
  
\item\label{equiv:ell2:dosL}  $P \jac{f}(x) + \jac{f}(x)^\top
  P \preceq - 2c P$ for all $x$, or equivalently
  $\mu_{2,P^{1/2}}(\jac{f}(x)) \leq -c$ for all $x$;
  
\item\label{equiv:ell2:dIS} $D^+\|x(t) - y(t)\|_{{2,P^{1/2}}} \leq -c
  \|x(t) - y(t)\|_{{2,P^{1/2}}}$, for all solutions $x(\cdot), y(\cdot)$,
  where $D^+$ is the upper right Dini derivative;

\item\label{equiv:ell2:IS} $\|x(t) - y(t)\|_{{2,P^{1/2}}} \leq
  e^{-ct}\|x(0) - y(0)\|_{{2,P^{1/2}}}$, for all solutions $x(\cdot),
  y(\cdot)$.
\end{enumerate}
A vector field $f$ satisfying any and therefore all of these conditions is
said to be \emph{$c$-strongly contacting}.
\end{theorem}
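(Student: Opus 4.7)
The plan is to prove the theorem by establishing the cycle of implications (i) $\Rightarrow$ (ii) $\Rightarrow$ (iii) $\Rightarrow$ (iv) $\Rightarrow$ (i). The equivalence between the two formulations inside statement (ii) itself is an immediate consequence of Lumer's equality~\eqref{eq:Lumer-equalities}: setting $A = \jac{f}(x)$, the matrix inequality $PA + A^\top P \preceq -2cP$ is precisely the condition $\mu_{2,P^{1/2}}(A) \leq -c$ recorded there.

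For (i) $\Rightarrow$ (ii), I would fix a point $x$ and an arbitrary direction $v \in \real^n$, substitute $y = x - hv$ into (i), and expand $f(x - hv) = f(x) - h\jac{f}(x)v + o(h)$. Dividing the resulting inequality by $h^2$ and sending $h \to 0^+$ yields $v^\top P \jac{f}(x) v \leq -c\, v^\top P v$ for every $v$; since the left-hand side equals its own transpose, symmetrization gives (ii). The reverse direction (ii) $\Rightarrow$ (i) is a single application of the fundamental theorem of calculus: writing $f(x) - f(y) = \int_0^1 \jac{f}(y + s(x-y))(x-y)\,ds$, pre-multiplying by $(x-y)^\top P$, and invoking (ii) under the integral sign recovers (i).

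For (ii) $\Rightarrow$ (iii), I would introduce the Lyapunov-type function $V(t) = \norm{x(t) - y(t)}{2,P^{1/2}}^2$, compute $\dot V(t) = 2(x(t) - y(t))^\top P(f(x(t)) - f(y(t)))$ along trajectories, and invoke (i) to obtain $\dot V \leq -2cV$. Taking the square root yields (iii) via the chain rule wherever $V > 0$. The step (iii) $\Rightarrow$ (iv) is then the standard Gronwall-type comparison lemma applied to the scalar Dini differential inequality. Finally, for (iv) $\Rightarrow$ (i), I would square (iv) and differentiate at $t = 0^+$ along trajectories with $x(0) = x$, $y(0) = y$: the left-hand derivative equals $2(x-y)^\top P(f(x)-f(y))$ and the right-hand derivative equals $-2c\norm{x-y}{2,P^{1/2}}^2$, recovering (i).

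The main obstacle will be the careful handling of the upper Dini derivative in (ii) $\Rightarrow$ (iii) at instants where $V(t) = 0$: the chain rule for $\sqrt{V}$ breaks down there, but uniqueness of solutions to $\dot x = f(x)$ implies that two trajectories agreeing at one instant agree thereafter, so the inequality is trivial on any interval where $V$ vanishes. The remaining steps are routine.
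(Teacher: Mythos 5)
Your proof is correct and follows the same overall architecture as the paper's sketch: the cycle \ref{equiv:ell2:osL}$\Rightarrow$\ref{equiv:ell2:dosL}$\Rightarrow$\ref{equiv:ell2:dIS}$\Rightarrow$\ref{equiv:ell2:IS}$\Rightarrow$\ref{equiv:ell2:osL}, with \ref{equiv:ell2:osL}$\Rightarrow$\ref{equiv:ell2:dosL} by perturbing $y=x\pm hv$ and letting $h\to 0^+$, \ref{equiv:ell2:dIS}$\Rightarrow$\ref{equiv:ell2:IS} by the \GB comparison lemma, and \ref{equiv:ell2:IS}$\Rightarrow$\ref{equiv:ell2:osL} by differentiating (Taylor expanding) at $t=0^+$. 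The one genuine divergence is the step \ref{equiv:ell2:dosL}$\Rightarrow$\ref{equiv:ell2:dIS}: the paper invokes Coppel's inequality (a matrix-measure bound along the variational/difference dynamics), whereas you first close the loop \ref{equiv:ell2:dosL}$\Rightarrow$\ref{equiv:ell2:osL} via the fundamental theorem of calculus and then differentiate the squared weighted norm $V(t)=\norm{x(t)-y(t)}{2,P^{1/2}}^2$, using uniqueness of solutions to dispose of the set where $V=0$. Your route is more elementary and self-contained, but it leans on the inner-product structure of $\norm{\cdot}{2,P^{1/2}}$ (so that $\dot V$ is exactly twice the osL quantity); Coppel's inequality is norm-agnostic, which is why the paper's choice of tool is the one that carries over verbatim to the non-Euclidean weak-pairing setting of Theorem~\ref{thm:contraction+non-euclidean}. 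Your extra implication \ref{equiv:ell2:dosL}$\Rightarrow$\ref{equiv:ell2:osL} is redundant for the equivalence but harmless, and your handling of the Dini derivative at points where the two trajectories coincide is exactly the care the sketch leaves implicit.
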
\smallskip

We refer to statement~\ref{equiv:ell2:osL} as the one-sided Lipschitz
condition (osL) and statement~\ref{equiv:ell2:dosL} as the differential
one-sided Lipschitz (d-osL) (a.k.a. the Demidovich condition). The last two
statements are about differential incremental stability (d-IS) and
exponential incremental stability (IS), respectively.

\begin{proof} We include an incomplete sketch of the proof.
  Statement~\ref{equiv:ell2:osL} implies \ref{equiv:ell2:dosL} by letting
  $y = x + hv$ for some $v \in \real^n$ and taking the limit as $h \to
  0^+$. Statement~\ref{equiv:ell2:dosL} implies \ref{equiv:ell2:dIS} by
  Coppel's inequality~\cite[Lemma~A]{CAD-HH:72}.  Statement~\ref{equiv:ell2:dIS} implies
  \ref{equiv:ell2:IS} by the \GB Comparison Lemma.
  Statement~\ref{equiv:ell2:IS} implies \ref{equiv:ell2:osL} by a Taylor
  expansion.
\end{proof}

Variations of Theorem~\ref{thm:contraction+IS-euclidean} hold for (1)
forward-invariant convex sets, (2) time-dependent vector fields, and (3)
non-differentiable vector fields $f$, where three of the four properties
remain equivalent: osL, d-IS, and IS.

For an affine $f(x)=Ax+b$, the osL condition reads
\begin{multline} \label{eq:contraction-affine-vf}
  (f(x) - f(y))^\top P (x-y) =   (x-y)^\top A^\top P (x-y) \\
  =  (x-y)^\top \frac{A^\top P+PA}{2} (x-y) \leq - c \norm{x-y}{2,P^{1/2}}^2. 
\end{multline}
Lumer's equalities~\eqref{eq:Lumer-equalities} imply that the smallest
number $-c$ ensuring the osL and d-osL conditions is
$-c=\mu_{2,P^{1/2}}(A)$.

\myclearpage\subsection{Consequences of Contraction: Equilibria} One of the
numerous desirable properties of strongly contracting vector fields is that
their flow forgets initial conditions (e.g., see
Figure~\ref{fig:contr-traj}) and, in the time-invariant case, globally
exponentially converges to a unique equilibrium point.  These points are
illustrated in the next result.

\begin{figure}[ht]\centering
  \includegraphics[width=.8\linewidth]{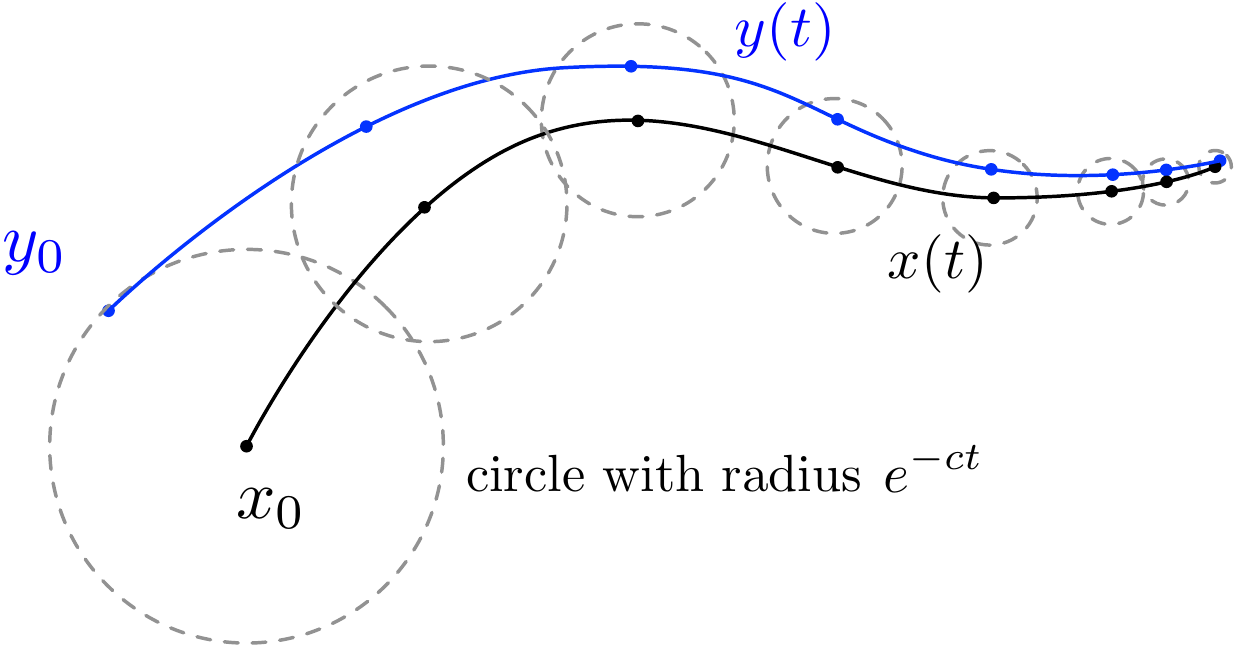}
  \caption{Exponential incremental stability of contracting vector
    fields. The distance between two trajectories decreases exponentially
    fast. }\label{fig:contr-traj}
\end{figure}

\begin{theorem}[Equilibria of contracting vector fields]\label{thm:equilibria:ell2}
  For a time-invariant vector field $f$ that is $c$-strongly contracting
  with respect to $\norm{\cdot}{2,P^{1/2}}$, $P=P^\top\succ0$,
  \begin{enumerate}
  \item\label{thm:eq:1} the flow of $f$ is a contraction, i.e., the
    distance between solutions exponentially decreases with rate $c$, and
  \item\label{thm:eq:2} there exists a unique equilibrium $x^*$ that is globally
    exponentially stable with global Lyapunov functions
    \begin{equation*} 
      x \mapsto  \norm{x-x^*}{2,P^{1/2}}^2
      \quad\text{ and }\quad
      x \mapsto  \norm{f(x)}{2,P^{1/2}}^2.
    \end{equation*} 
  \end{enumerate}
\end{theorem}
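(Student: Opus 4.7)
The plan is to extract part (i) directly from Theorem~\ref{thm:contraction+IS-euclidean} and then derive every claim in part (ii) as a consequence of the contraction property of the time-$T$ flow map.

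First, statement (i) is nothing but the equivalence \ref{equiv:ell2:IS} in Theorem~\ref{thm:contraction+IS-euclidean}: for any two solutions $x(\cdot)$ and $y(\cdot)$, $\norm{x(t)-y(t)}{2,P^{1/2}} \le e^{-ct}\norm{x(0)-y(0)}{2,P^{1/2}}$. In other words, for every $T>0$ the flow map $\Phi_T : x(0)\mapsto x(T)$ is a strict contraction on the complete normed space $(\real^n,\norm{\cdot}{2,P^{1/2}})$ with factor $e^{-cT}$. (Forward completeness of the flow, needed to make $\Phi_T$ everywhere defined, follows from the fact that the osL condition yields a linear-in-time a priori bound on $\norm{x(t)}{2,P^{1/2}}$.)

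For part (ii), existence and uniqueness of an equilibrium follows by applying the Banach fixed point theorem to $\Phi_T$ for some fixed $T>0$, which yields a unique $x^* \in \real^n$ with $\Phi_T(x^*) = x^*$. To promote this into a genuine equilibrium of $f$, I use commutativity of the autonomous flow: for any $s\ge 0$, $\Phi_T(\Phi_s(x^*)) = \Phi_s(\Phi_T(x^*)) = \Phi_s(x^*)$, so $\Phi_s(x^*)$ is also a fixed point of $\Phi_T$; uniqueness forces $\Phi_s(x^*)=x^*$ for all $s\ge 0$, hence $f(x^*)=0$. Uniqueness of $x^*$ as an equilibrium of $f$ is immediate since two distinct equilibria would give two constant solutions violating the exponential decay in (i).

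Global exponential stability with the first Lyapunov function $V(x)=\norm{x-x^*}{2,P^{1/2}}^2$ then follows in two compatible ways: either directly from (i) by setting $y(\cdot)\equiv x^*$, or by differentiating $V$ along trajectories and invoking the osL condition (Theorem~\ref{thm:contraction+IS-euclidean}\ref{equiv:ell2:osL}) with $y=x^*$ to obtain $\dot V = 2(x-x^*)^\top P\bigl(f(x)-f(x^*)\bigr) \le -2c\,V$, which combined with the \GB lemma gives the exponential bound. For the second Lyapunov function $W(x)=\norm{f(x)}{2,P^{1/2}}^2$, I differentiate along trajectories to get $\dot W = f(x)^\top\bigl(P\,\jac{f}(x) + \jac{f}(x)^\top P\bigr)f(x)$ and then apply the Demidovich condition (Theorem~\ref{thm:contraction+IS-euclidean}\ref{equiv:ell2:dosL}), evaluated on the vector $v = f(x)$, to conclude $\dot W \le -2c\,W$.

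The only nontrivial step is the promotion of a fixed point of $\Phi_T$ to an actual zero of $f$, together with the preliminary verification of forward completeness so that $\Phi_T$ is globally defined; everything else is an application of Theorem~\ref{thm:contraction+IS-euclidean} combined with the \GB inequality.
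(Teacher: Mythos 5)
Your proposal is correct and follows essentially the same route as the paper's (sketched) proof: part (i) comes from the incremental-stability characterization in Theorem~\ref{thm:contraction+IS-euclidean}, part (ii) from applying the Banach fixed-point theorem to the time-$T$ flow map (which is a contraction with factor $e^{-cT}$), and the Lyapunov claims from direct computation using the osL and Demidovich conditions. Your flow-commutation argument showing $\Phi_s(x^*)=x^*$ for all $s$ is just a cleaner way of carrying out the paper's step of ruling out a nontrivial $\tau$-periodic orbit; otherwise the two arguments coincide.
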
\smallskip
\begin{proof}
  We include an incomplete sketch of the proof.
  Theorem~\ref{thm:contraction+IS-Riem}\ref{equiv:ell2:IS} immediately
  implies \ref{thm:eq:1} and that, for any positive $\tau$, the flow map of
  the vector field at time $\tau$ is a contraction map with constant
  $\e^{-c\tau}$. The fixed point of this contraction map is either a
  period orbit with period $\tau$ (which is impossible) or a fixed point of
  the flow map. The global Lyapunov functions follow from direct
  computation.
\end{proof}

\myclearpage\subsection{Equilibrium Computation via Forward Step Method}
The study of monotone operators is closely related to the study of
contracting vector fields.  As it is classic in the study of monotone
operators, we here aim to provide an algorithm to compute the equilibrium
points of a vector field $f$ (equivalently regarded as an operator):
\begin{equation}
  x^*\in \zero(f) \quad\iff\quad x^*\in\fixed(\Id + \alpha f),
\end{equation}
for any $\alpha>0$, where $\Id$ is the identity map.  Here we define
$\zero(f)=\setdef{x\in\real^n}{f(x)=0}$ and $\fixed(\Id+\alpha
f)=\setdef{x\in\real^n}{x=(\Id+\alpha{f})x}$.  A map $f$ is (globally)
\emph{$\ell$-Lipschitz continuous} if
\begin{equation}
  \norm{f(x) - f(y) }{2,P^{1/2}}\leq \ell \norm{x-y}{2,P^{1/2}}.
\end{equation}
for all $x,y$. We define the \emph{operator condition number} of a
$c$-strongly contracting and $\ell$-Lipschitz continuous map $f$ by
\begin{equation}
  \kappa =\ell/c\geq 1.
\end{equation}

\begin{remark}[Literature comparison]
\label{rem:monone-op}
In the literature on monotone operators, given $P=P^\top\succ0$, the map
$\map{g}{\real^n}{\real^n}$ is \myemph{$c$-strongly monotone} if
\begin{equation}
  (g(x) - g(y))^\top P (x-y) \geq c \norm{x-y}{2,P^{1/2}}^2.
\end{equation}
Clearly, $g$ is $c$-strongly monotone if and only if $-g$ is $c$-strongly
contracting.

  Next, we compare the operator condition number of a contracting affine
  $f(x)=Ax+b$, $A\in\real^{n\times{n}}$, with the standard contraction
  number of $A$.  First, recall that, given a norm $\norm{\cdot}{}$, the
  \emph{condition number} of a square invertible matrix $A$ is $\kappa(A)
  =\norm{A}{}\norm{A^{-1}}{}$.  Second, for the $P^{1/2}$-weighted $\ell_2$
  norm, we know from~\eqref{eq:contraction-affine-vf} that the contraction
  rate of $f$ equals $\mu_{2,P^{1/2}}(A)$.  Accordingly, given a norm
  $\norm{\cdot}{}$, the \emph{operator condition number} of a square matrix
  $A$ with $\mu(A)<0$ is
  \begin{equation}
    \kappa_\mu(A)= \frac{\norm{A}{}}{|\mu(A)|}.
  \end{equation}
  From \cite{CAD-HH:72}, note that $\mu(A)<0$ implies $\norm{A^{-1}}{}\leq
  1/|\mu(A)|$. Therefore, $\kappa(A) \leq \kappa_\mu(A)$.  One can show
  that the two condition numbers coincide for $A=A^\top$ and~$P=I_n$.
  \oprocend
\end{remark}


Given a start point $x_0\in\real^n$, the \emph{forward step method} for the
operator $f$, i.e., the explicit Euler integration algorithm for the vector
field $f$, is:
\begin{equation}
  \label{eq:forward}
  x_{k+1} = (\Id +\alpha f)x_k = x_k + \alpha f(x_k).
\end{equation}

\begin{theorem}{}\emph{(Optimal step size and contraction factor of forward step method)}
  \label{thm:forward-step-Euclidean}
  For $P=P^\top\succ0$, consider a map $\map{f}{\real^n}{\real^n}$ with
  strong contraction rate $c>0$, Lipschitz constant $\ell>0$, and condition
  number $\kappa =\ell/c$. Then
  \begin{enumerate}
  \item the map $\Id+\alpha f$ is a contraction map with respect to
    $\norm{\cdot}{2,P^{1/2}}$ for
    \begin{equation*}  0<\alpha<\frac{2}{c\kappa^2},
    \end{equation*}
  \item the step size minimizing the contraction factor and the minimum
    contraction factor (that is, the minimal Lipschitz constant of
    $\Id+\alpha f$) are
    \begin{equation}
    \begin{aligned}
     \subscr{\alpha}{E}^* &=  \frac{1}{c\kappa^2}, \\
      \subscr{\ell}{E}^*& = \Big(1-\frac{1}{\kappa^2}\Big)^{1/2}
      =1-\frac{1}{2\kappa^2} + \bigO\Big(\frac{1}{\kappa^4}\Big).
    \end{aligned}    
    \end{equation}    
  \end{enumerate}
\end{theorem}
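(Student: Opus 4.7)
The plan is to directly bound the $P$-weighted squared norm of $(\Id+\alpha f)(x) - (\Id+\alpha f)(y)$ using the two hypotheses on $f$, and then minimize the resulting quadratic in $\alpha$. Let $u = x - y$ and $v = f(x) - f(y)$, so that
\[
\norm{(\Id+\alpha f)(x) - (\Id+\alpha f)(y)}{2,P^{1/2}}^2
= \norm{u}{2,P^{1/2}}^2 + 2\alpha\, u^\top P v + \alpha^2 \norm{v}{2,P^{1/2}}^2.
\]
The osL characterization (Theorem~\ref{thm:contraction+IS-euclidean}\ref{equiv:ell2:osL}) gives $u^\top P v \leq -c\,\norm{u}{2,P^{1/2}}^2$, and the Lipschitz hypothesis gives $\norm{v}{2,P^{1/2}}^2 \leq \ell^2 \norm{u}{2,P^{1/2}}^2$. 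Substituting these bounds yields
\[
\norm{(\Id+\alpha f)(x) - (\Id+\alpha f)(y)}{2,P^{1/2}}^2 \leq q(\alpha)\, \norm{u}{2,P^{1/2}}^2,
\qquad q(\alpha) := 1 - 2c\alpha + \ell^2\alpha^2.
\]

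The first claim follows by noting that $q(\alpha) < 1$ is equivalent to $\alpha(\ell^2\alpha - 2c) < 0$, i.e., $0 < \alpha < 2c/\ell^2 = 2/(c\kappa^2)$, so on this interval $\Id + \alpha f$ is a contraction with factor $\sqrt{q(\alpha)} < 1$. For the second claim, since $q$ is a strictly convex quadratic in $\alpha$ with positive leading coefficient $\ell^2$, its unique minimizer is obtained from $q'(\alpha)=0$, giving $\subscr{\alpha}{E}^* = c/\ell^2 = 1/(c\kappa^2)$. Substituting back,
\[
q(\subscr{\alpha}{E}^*) = 1 - \frac{c^2}{\ell^2} = 1 - \frac{1}{\kappa^2},
\]
and taking square roots yields $\subscr{\ell}{E}^* = (1 - 1/\kappa^2)^{1/2}$. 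The asymptotic expansion in $1/\kappa^2$ is then a standard Taylor series computation.

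The derivation is essentially routine once the osL inequality is invoked in its inner-product form rather than its differential form; the main conceptual point (and the only place where something could go wrong) is recognizing that the correct hypothesis for $u^\top P v$ is Theorem~\ref{thm:contraction+IS-euclidean}\ref{equiv:ell2:osL}, not the Demidovich/d-osL form, since we need a bound between two arbitrary points, not a tangent-space inequality. No further subtlety arises: the quadratic $q(\alpha)$ and its minimization over $\alpha>0$ constitute the entire argument, and there is no obstacle worth dwelling on.
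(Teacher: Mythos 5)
Your proof is correct and follows the same approach as the paper: expand the $P$-weighted squared norm, bound the cross term via the osL inequality and the last term via the Lipschitz bound to obtain the quadratic $1 - 2c\alpha + \ell^2\alpha^2$, then minimize in $\alpha$. This is exactly the paper's argument, with only the added (and accurate) remark that the osL form rather than the Demidovich form is the right hypothesis to invoke.
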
\medskip

\begin{proof}
  We only sketch the standard proof here:
  \begin{align*}
    \| (\Id  +\alpha & f)x -  (\Id+\alpha f)y \|_{2,P^{1/2}}^2
    \\
    &=
    \norm{ x-y + \alpha (f(x) - f(y)) }{{2,P^{1/2}}}^2
    \\
    &=    \norm{x-y}{{2,P^{1/2}}}^2
    + 2\alpha (f(x) - f(y))^\top P (x-y) \\
    &\qquad +\alpha^2\norm{f(x) - f(y)}{{2,P^{1/2}}}^2 
    \\
    &\leq (1-2\alpha c + \alpha^2 {\ell}^2) \norm{x-y}{{2,P^{1/2}}}^2.
  \end{align*}
  It is easy to check that $(1-2\alpha c + \alpha^2 \ell^2)<1$ if and only
  if $0<\alpha<2c/{\ell}^2$ and that the minimal contraction factor is
  $(1-c^2/{\ell}^2)^{1/2}$ at $\alpha^*=c/{\ell}^2$.
\end{proof}

\myclearpage
\section{Contraction Theory and Monotone Operators on Riemannian Manifolds}
\newcommand{\sfM}{\textsf{M}}
\renewcommand{\d}{\mathrm{d}}
\newcommand{\G}{\mathbb{G}}

In this section we consider a Riemannian manifold $(\sfM,\G)$ with
associated Levi-Civita connection $\nabla$, geodesic distance $\d_\G$, and
parallel transport $P(\gamma)$ along a geodesic arc $\gamma$.  Let
$\inprod{\cdot}{\cdot}_\G$ denote the inner product associated to $\G$ and
$\gamma'$ denote the velocity vector along a geodesic arc.

Loosely speaking, a vector field $X$ on a Riemannian manifold is
geodesically contracting ($-X$ is geodesically monotone) if the first
variation of the length of each geodesic arc $\gamma$, with infinitesimal
variation equal to the restriction of $X$ to $\gamma$, is nonpositive.

\begin{figure}[ht]\centering
  \includegraphics[width=.8\linewidth]{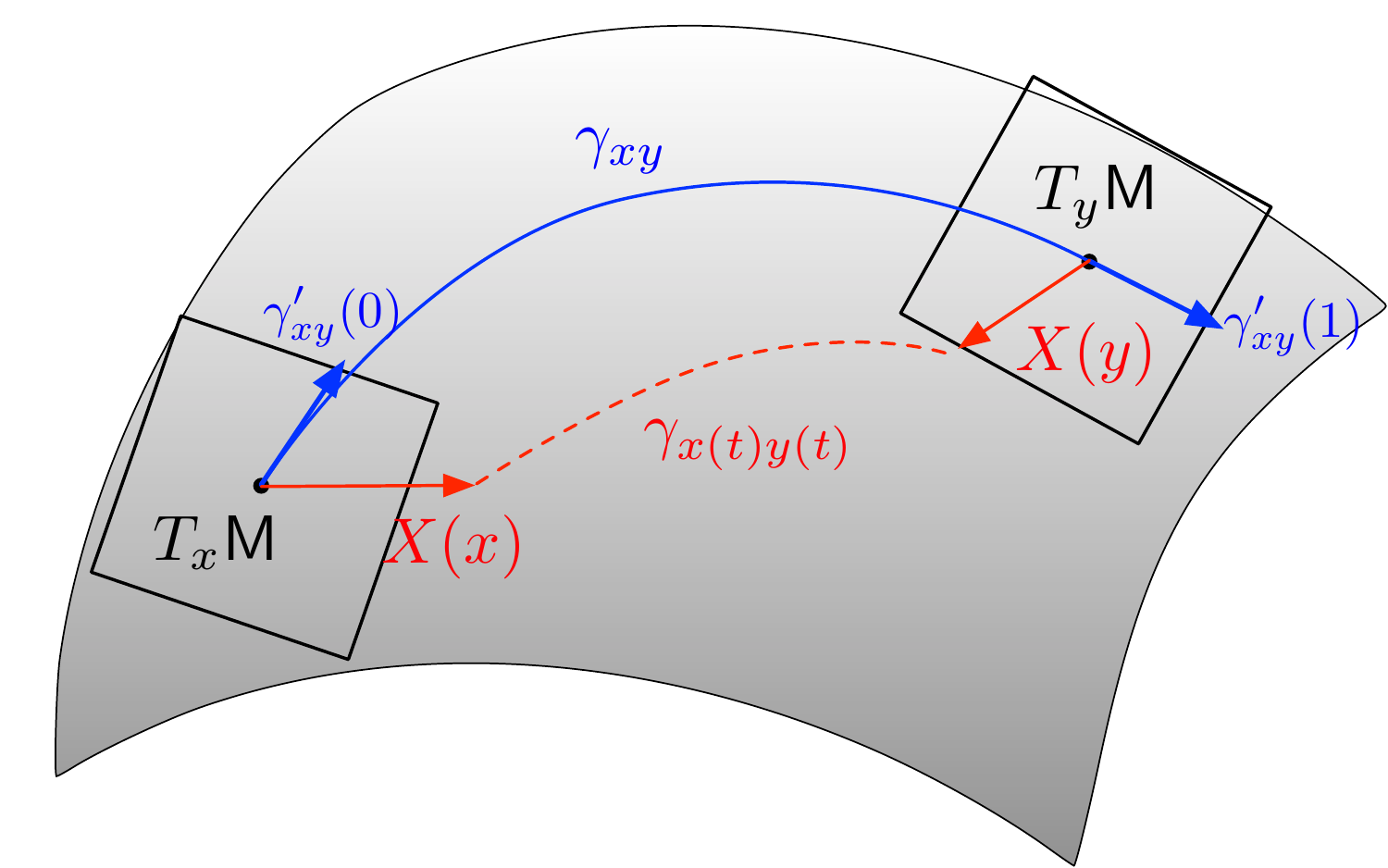}
  \caption{Contractivity of a vector field $X$ on a Riemannian manifold:
    the length of the geodesic curve $\gamma_{xy}$ connecting any two
    points $x$ and $y$ decreases along the flow of $X$, as a function of
    the inner product between $X$ and the geodesic velocity vector at $x$
    and $y$.}\label{fig:contr-traj}
\end{figure}

\myclearpage\subsection{Contraction and Incremental Stability}
We consider a time-independent vector field $X$ 
\begin{equation}
  \dot{x} = X(x).
\end{equation}

\begin{theorem}[Equivalences on $(\sfM,\G)$]
  \label{thm:contraction+IS-Riem}
  For a Riemannian manifold $(\sfM,\G)$ and $c>0$, the following statements
  are equivalent:
 \begin{enumerate}    
 \item\label{Riem:1}  for any $x,y\in\sfM$ and geodesic curve
   $\map{\gamma_{xy}}{[0,1]}{\sfM}$ with $\gamma_{xy}(0)=x$, $\gamma_{xy}(1)=y$,
   \begin{align*}
     &\inprod{X(y)}{\gamma_{xy}'(1)}_\G - \inprod{X(x)}{\gamma_{xy}'(0)}_\G
     \leq -c \, \d_\G(x,y)^2 ;
   \end{align*}
   
 \item\label{Riem:2}  for all $v_x\in T_x\sfM$
   \begin{equation*}
     \inprod{A_X(x)v_x}{v_x}_{\G} \leq -c  \norm{v_x}{\G}^2,
   \end{equation*}
   where $\map{A_X(x)}{T_x\sfM}{T_x\sfM}$ is the \emph{covariant
     differential of} $X$ defined by $A_X(x) v_x = \nabla_{v_x}X(x)$;
      
 \item\label{Riem:3} $D^+ \d_\G(x(t), y(t)) \leq -c\, \d_\G(x(t), y(t)) $,
   for all solutions $x(\cdot), y(\cdot)$;
   
 \item\label{Riem:4}  $\d_\G(x(t), y(t)) \leq e^{-ct} \d_\G(x(0), y(0)) $,
   for all solutions $x(\cdot), y(\cdot)$.
 \end{enumerate}
 A vector field $X$ satisfying any and therefore all of these conditions is
 said to be \emph{$c$-strongly contacting}.
\end{theorem}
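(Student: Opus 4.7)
The plan is to prove the cycle of implications (i) $\Rightarrow$ (ii) $\Rightarrow$ (iii) $\Rightarrow$ (iv) $\Rightarrow$ (i), mirroring the structure of the Euclidean proof of Theorem~\ref{thm:contraction+IS-euclidean} but with three substitutions: straight-line segments become minimizing geodesics, the ordinary derivative becomes the Levi-Civita covariant derivative (so $\jac{f}$ is replaced by $A_X$), and Coppel's inequality is replaced by the first variation formula for arc length.

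For (i) $\Rightarrow$ (ii), I would localize the geodesic inequality by taking $y = \exp_x(s v_x)$ for $v_x \in T_x\sfM$ and small $s > 0$, with connecting geodesic $\gamma_s(r) = \exp_x(r s v_x)$ on $[0,1]$. Then $\gamma_s'(0) = s v_x$, $\gamma_s'(1)$ is the parallel transport of $s v_x$ along $\gamma_s$, and $\d_\G(x,y) = s \norm{v_x}{\G}$. Dividing the inequality in (i) by $s^2$ and letting $s \to 0^+$ identifies the left-hand side with $\inprod{\nabla_{v_x} X(x)}{v_x}_\G = \inprod{A_X(x)v_x}{v_x}_\G$, by metric compatibility of $\nabla$ with the inner product. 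This yields (ii).

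For (ii) $\Rightarrow$ (iii), I fix $t$ and take the unit-speed minimizing geodesic $\gamma:[0,L]\to\sfM$ from $x(t)$ to $y(t)$ where $L = \d_\G(x(t),y(t))$. I then construct a smooth variation of $\gamma$ whose endpoint curves are the flows of $X$ through $x(t)$ and $y(t)$; because the length of the varied curve upper bounds $\d_\G(x(\cdot),y(\cdot))$ and agrees with it at time $t$, the first variation formula delivers
\[
  D^+ \d_\G(x(t), y(t)) \leq \inprod{X(y(t))}{\gamma'(L)}_\G - \inprod{X(x(t))}{\gamma'(0)}_\G.
\]
Applying the fundamental theorem of calculus to $s\mapsto \inprod{X(\gamma(s))}{\gamma'(s)}_\G$, using $\nabla_{\gamma'}\gamma' = 0$, rewrites the right-hand side as $\int_0^L \inprod{A_X(\gamma(s))\gamma'(s)}{\gamma'(s)}_\G\, ds$, which by (ii) is bounded above by $-c\int_0^L \norm{\gamma'(s)}{\G}^2\, ds = -cL$. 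This is exactly (iii). The implication (iii) $\Rightarrow$ (iv) follows from the \GB comparison lemma applied verbatim to the scalar Dini inequality, and (iv) $\Rightarrow$ (i) is obtained by differentiating the exponential bound at $t=0^+$ and reading off the first variation formula with $\gamma_{xy}$ now parameterized on $[0,1]$, where $\norm{\gamma_{xy}'}{\G} \equiv \d_\G(x,y)$; clearing denominators produces the quadratic right-hand side in (i).

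The main technical obstacle is the non-smoothness of $\d_\G$ at the cut locus, where minimizing geodesics are not unique and $\d_\G$ fails to be $C^1$. This is precisely why (iii) is formulated with the upper right Dini derivative rather than a classical derivative: the first-variation computation only yields an upper bound on $D^+\d_\G$ via the length-dominates-distance inequality, which is valid for \emph{any} choice of minimizing geodesic even at singular points. The implication (iv) $\Rightarrow$ (i) requires parallel care: for $(x,y)$ joined by a unique minimizing geodesic the derivative computation is standard, and the general case is recovered by a continuity and density argument since the set of pairs with a unique connecting geodesic is open and dense.
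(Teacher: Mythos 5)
The paper's own ``proof'' is only a pointer to the literature: it cites \cite{SZN:99b,JXDCN-OPF-LRLP:02} for \ref{Riem:1}$\iff$\ref{Riem:2}, cites \cite{JWSP-FB:12za} for \ref{Riem:2}$\implies$\ref{Riem:3},\ref{Riem:4}, and notes that \ref{Riem:3}$\iff$\ref{Riem:4} is \GB. Your proposal is a correct, self-contained reconstruction of precisely those arguments: the localization $y=\exp_x(sv_x)$ with $s\to 0^+$, the first variation of arc length combined with the fundamental theorem of calculus and $\nabla_{\gamma'}\gamma'=0$ to reduce \ref{Riem:2} to a pointwise bound integrated along the geodesic, and the Dini-derivative/cut-locus care. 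So this is essentially the same approach as the cited sources, just written out.

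One point worth tightening. Statement~\ref{Riem:1} is phrased for \emph{any} geodesic $\gamma_{xy}$ joining $x$ to $y$, not only a minimizing one, whereas your step \ref{Riem:4}$\implies$\ref{Riem:1} differentiates $\d_\G$ and hence only recovers the inequality along a \emph{minimizing} geodesic. To obtain \ref{Riem:1} in full generality, reuse the integral identity you already employ in \ref{Riem:2}$\implies$\ref{Riem:3}: once \ref{Riem:2} is available, for any geodesic $\map{\gamma_{xy}}{[0,1]}{\sfM}$ of constant speed $L(\gamma_{xy})$,
\begin{equation*}
  \inprod{X(y)}{\gamma_{xy}'(1)}_\G - \inprod{X(x)}{\gamma_{xy}'(0)}_\G
  = \int_0^1 \inprod{A_X(\gamma_{xy}(s))\gamma_{xy}'(s)}{\gamma_{xy}'(s)}_\G\, ds
  \le -c\,L(\gamma_{xy})^2 \le -c\,\d_\G(x,y)^2.
\end{equation*}
Thus the clean cycle is \ref{Riem:1}$\implies$\ref{Riem:2}$\implies$\ref{Riem:3}$\implies$\ref{Riem:4}$\implies$(\ref{Riem:1} for minimizing geodesics)$\implies$\ref{Riem:2}$\implies$\ref{Riem:1}, or equivalently one remarks upfront that \ref{Riem:1} restricted to minimizing geodesics already implies the unrestricted version via the display above. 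This is a one-line fix, not a gap in the idea.
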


\smallskip

\begin{proof}
  We refer to the appropriate references.  The equivalence between
  property~\ref{Riem:1} and property~\ref{Riem:2} is given in
  \cite{SZN:99b,JXDCN-OPF-LRLP:02}.  The implication
  \ref{Riem:2}$\implies$\ref{Riem:3} and \ref{Riem:4} is studied in
  \cite{JWSP-FB:12za}.  As before, the equivalence between
  statement~\ref{Riem:3} and statement~\ref{Riem:4} is independent of the
  vector field $X$ and related to the \GB comparison lemma.  
\end{proof}


Here are some comments drawing a parallel between
Theorems~\ref{thm:contraction+IS-Riem}
and~\ref{thm:contraction+IS-euclidean}. First, condition~\ref{Riem:1} is
known \cite{SZN:99b,JXDCN-OPF-LRLP:02} to be equivalent to either of the
following conditions
\begin{align*}
  & \inprod{\gamma_{xy}'(t)}{X(\gamma(t)}_\G +
  c \norm{\gamma'_{xy}(0)}{\G}^2 t \text{ is monotone decreasing},\\
  &   \inprod{ P(\gamma_{yx})_{y\to x} X(y) - X(x)}{\gamma_{yx}'(0)}_\G  
     \leq -c \, \d_\G(x,y)^2,
\end{align*}
where $\map{P(\gamma_{yx})_{y\to x}}{T_y\sfM}{T_x\sfM}$ is the parallel
transport along the geodesic from $y$ to $x$. It is easy to see that, when
$(\sfM,\G)$ is the Euclidean space with the standard $\ell_2$ inner
product, condition~\ref{Riem:1} coincides with the one-side Lipschitz
condition in
Theorem~\ref{thm:contraction+IS-euclidean}\ref{equiv:ell2:osL}.

Second, we clarify that statement~\ref{Riem:2} can easily be, and usually
is, written in components.  For every $x \in \sfM$ and in a coordinate
chart $(x^1,\ldots,x^n)$ in a neighborhood of $x$, statement~\ref{Riem:2}
is equivalent to the linear matrix inequality:
\begin{equation}\label{Eq:GeneralizedDemi}
  \left[\G_{ki}\frac{\partial X^k}{\partial x^\ell} + \frac{\partial
      X^k}{\partial x^i}\G_{k\ell} + \frac{\partial \G_{i\ell}}{\partial
      x^j}X^j\right] \preceq -2 c [\G_{i\ell}],
\end{equation}
or, in matrix form, letting $\G$ denote both the Riemannian metric as well
as its matrix coordinate representation,
\begin{equation}\label{Eq:GeneralizedDemi}
  \G(x) \jacfrac{X}{x}(x) + \jacfrac{X}{x}(x)^\top \G(x) + \dot{\G}(x)
  \preceq -2 c \G(x).
\end{equation}
This is the classic contraction condition given in~\cite{WL-JJES:98}, that
generalizes the classic Demidovich condition in
Theorem~\ref{thm:contraction+IS-euclidean}\ref{equiv:ell2:dosL}. The
parallel between Theorem~\ref{thm:contraction+IS-Riem}\ref{Riem:3}
and~\ref{Riem:4} versus
Theorem~\ref{thm:contraction+IS-euclidean}\ref{equiv:ell2:dIS}
and~\ref{equiv:ell2:IS} is evident.

\myclearpage\subsection{Consequences of Contraction: Equilibria}
In the interest of brevity we do not replicate Theorem~\ref{thm:equilibria:ell2},
whose extension to the Riemannian setting naturally holds.


\myclearpage\subsection{Equilibrium Computation via Forward Step Method}

We start with two useful definitions.  Recall that a Riemannian manifold
$\sfM$ is complete if, for every $v_x\in T_x\sfM$, the geodesic curve
$\gamma_{v_x}(t)$ starting at $v_x$ at time $0$ is defined for all
$t\geq0$. Accordingly, the \emph{exponential map}
$\map{\exp_x}{T_x\sfM}{\sfM}$ is defined by $\exp_x(v_x)=\gamma_{v_x}(1)$.
A vector field $X$ is \emph{$\ell$-Lipschitz continuous} if
\begin{equation}
  \norm{ P(\gamma_{xy})_{x\to{y}} X(x)-X(y) }{\G} \leq  \ell \, \d_\G(x,y),
\end{equation}
for any $x,y\in\sfM$. Here we assume for simplicity that the geodesic
$\gamma_{xy}$ from $x$ to $y$ is unique.

Given a start point $x_0\in\sfM$, the \emph{forward step method} for the
operator $f$, i.e., the explicit Euler integration algorithm for the vector
field $f$, is:
\begin{equation}
  \label{eq:forward-sfM}
  x_{k+1} = \exp_{x_k}(\alpha X(x_k)).
\end{equation}

The following result is given in \cite[Theorem~5.1]{JXDCN-OPF-LRLP:02}.
\begin{theorem}{}\emph{(Riemannian forward step method)}
  \label{thm:forward-step}
  Consider a vector field $X$ on a Riemannian manifold $(\sfM,\G)$ with
  strong contraction rate $c>0$, Lipschitz constant $\ell>0$, and condition
  number $\kappa =\ell/c$. The sequence $\{x_k\}$ converges to the unique
  equilibrium point of $X$.
\end{theorem}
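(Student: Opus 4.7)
The plan is to mirror the Euclidean forward step analysis of Theorem~\ref{thm:forward-step-Euclidean}, but with the exponential map replacing affine addition and the geodesic distance $\d_\G$ replacing the norm difference. Concretely, I would show that for sufficiently small $\alpha>0$, the map $T_\alpha(x) := \exp_x(\alpha X(x))$ is a contraction with respect to $\d_\G$, then invoke Banach's fixed point theorem (valid because $(\sfM,\d_\G)$ is a complete metric space) to conclude that $\{x_k\}$ converges to the unique fixed point of $T_\alpha$. Finally, I would identify this fixed point with the unique equilibrium $x^\star$ of $X$, which exists by the Riemannian analog of Theorem~\ref{thm:equilibria:ell2}: indeed $\exp_x(\alpha X(x))=x$ forces $\alpha X(x)$ to lie in the kernel of $\exp_x$, and for $\alpha$ small enough this implies $X(x)=0$.

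The central estimate is a bound of the form
\begin{equation*}
\d_\G(T_\alpha(x),T_\alpha(y))^2 \leq \bigl(1 - 2\alpha c + \alpha^2 \ell^2\bigr)\, \d_\G(x,y)^2,
\end{equation*}
which I would obtain by a second-order expansion of the squared geodesic distance along the one-parameter family of geodesics joining $T_\alpha(x)$ to $T_\alpha(y)$. Specifically, fixing the minimizing geodesic $\gamma_{xy}:[0,1]\to\sfM$ and the variation $s\mapsto \exp_{\gamma_{xy}(s)}(\alpha X(\gamma_{xy}(s)))$, the first variation formula contributes the term $2\alpha\bigl(\inprod{X(y)}{\gamma_{xy}'(1)}_\G - \inprod{X(x)}{\gamma_{xy}'(0)}_\G\bigr)$, which by the contraction condition in Theorem~\ref{thm:contraction+IS-Riem}\ref{Riem:1} is at most $-2\alpha c\,\d_\G(x,y)^2$. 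The second-order term is controlled by the parallel-transport Lipschitz bound on $X$, yielding the $\alpha^2\ell^2\,\d_\G(x,y)^2$ contribution. From here the same algebra as in Theorem~\ref{thm:forward-step-Euclidean} shows that the bracket is strictly less than $1$ for all $0<\alpha<2/(c\kappa^2)$, with optimum at $\alpha^\star=1/(c\kappa^2)$ giving contraction factor $(1-1/\kappa^2)^{1/2}$.

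The hard part, and the reason the proof cannot simply transcribe the Euclidean one, is the geodesic second-variation step. On a Riemannian manifold the cross term and the quadratic remainder pick up curvature contributions through Jacobi fields, and the estimates only close cleanly when the step $\alpha X(x_k)$ stays inside an appropriate injectivity neighborhood. I would handle this by restricting attention to the uniform geodesic ball around $x^\star$ on which $T_\alpha$ is well-defined and single-valued, verifying that this ball is $T_\alpha$-invariant using the contraction inequality itself. Once invariance is established, Banach's theorem applies on the complete subspace, and iteration from the chosen $x_0$ gives $\d_\G(x_k,x^\star)\to 0$ at rate $(1-1/\kappa^2)^{k/2}$ for the optimal step size. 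The remaining technicalities amount to quoting the standard Jacobi-field comparison estimates, which is where I would defer to \cite{JXDCN-OPF-LRLP:02} rather than reprove them.
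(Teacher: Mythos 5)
Your central estimate,
\begin{equation*}
\d_\G\bigl(T_\alpha(x),T_\alpha(y)\bigr)^2 \leq \bigl(1 - 2\alpha c + \alpha^2 \ell^2\bigr)\, \d_\G(x,y)^2,
\end{equation*}
which would make $T_\alpha(x) = \exp_x(\alpha X(x))$ a Banach contraction, is precisely what the paper flags as open immediately after the theorem: ``To the best of the authors' knowledge, it is an open conjecture whether the algorithm $x\mapsto \exp_{x_k}(\alpha X(x_k))$ is a Banach contraction mapping.'' The step where you say the second-order term is ``controlled by the parallel-transport Lipschitz bound'' and defer the rest to Jacobi-field comparison estimates is exactly where the argument does not close in general. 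On a Riemannian manifold the second variation of $s\mapsto \d_\G\bigl(\exp_{\gamma(s)}(\alpha X(\gamma(s))),\,\cdot\,\bigr)^2$ carries curvature terms through Jacobi fields along the connecting geodesics, and these terms are not dominated by $\alpha^2\ell^2$ alone: for positive sectional curvature they can work against you, and in the absence of a global curvature bound there is no Euclidean-style clean cancellation. Restricting to a geodesic ball and asserting invariance ``using the contraction inequality itself'' is circular, since the contraction inequality is what you have not yet established.

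The paper itself does not prove this theorem; it cites \cite[Theorem~5.1]{JXDCN-OPF-LRLP:02}, which establishes convergence of the iterates by a different, weaker route than a Banach fixed-point argument (essentially a descent/Fej\'er-type analysis for geodesically monotone fields that proves the sequence is bounded and its accumulation points are equilibria, then invokes uniqueness), rather than by showing the step map has Lipschitz constant strictly less than $1$. So your proposal is not a proof of the stated theorem but an attempt at a strictly stronger, unresolved statement, and the one genuinely hard step in it is left as a gesture toward ``standard Jacobi-field estimates'' that are not, in fact, standard in this generality.
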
\medskip

To the best of the authors' knowledge, it is an open conjecture whether the
algorithm $x\mapsto \exp_{x_k}(\alpha X(x_k))$ given in
equation~\eqref{eq:forward-sfM} is a Banach contraction mapping.

Practical implementations of the Riemannian forward step algorithms may
rely upon retractions (as an easily computable replacement of the
exponential map).

\myclearpage
\section{Contraction Theory and Monotone Operators on Non-Euclidean Spaces}
We now consider non-Euclidean spaces including, for example, $\real^n$
equipped with either the $\ell_1$ or $\ell_\infty$ norms.

\myclearpage\subsection{Linear Algebra Detour: Weak Pairings}
\label{subsec:WP}

\begin{table*}\centering\normalsize
  {\begin{tabular}{
	 p{0.2\textwidth}
	p{0.3\textwidth}
	 p{0.4\textwidth}
      }
      Norm  & WP & Matrix measure and Lumer equality
      \\
      \hline
        \rowcolor{LightGray}
        $ \begin{aligned}
	\norm{x}{1} &= \sum_i |x_i| 
      \end{aligned}$
      &
      $\begin{aligned}
	\WP{x}{y}_{1} &= \|y\|_{1}\sign(y)^\top x
      \end{aligned}$
      & $\begin{aligned}
	\mu_{1}(A) &= \max_{j \in \until{n}} \Big(a_{jj} + \sum\nolimits_{i \neq j} |a_{ij}|\Big) \\
        &=  \sup_{\|x\|_{1} = 1}\sign(x)^\top Ax 
      \end{aligned}$
        \\
      $ \begin{aligned}
	\norm{x}{\infty} &= \max_i |x_i|
      \end{aligned}$
      &
      $\begin{aligned}
        \WP{x}{y}_{\infty} &= \max_{i \in I_{\infty}(y)} y_ix_i
      \end{aligned}$
      & $\begin{aligned}
	\mu_{\infty}(A) &=\max_{i \in \until{n}} \Big(a_{ii} + \sum\nolimits_{j \neq i} |a_{ij}|\Big) \\
        &=\max_{\|x\|_{\infty} = 1}			\max_{i \in I_{\infty}(x)} x_i(Ax)_i 
      \end{aligned}$
      \\\hline
  \end{tabular}}
  \caption{Table of norms, WPs, and matrix measures for $\ell_1$, and
    $\ell_\infty$ norms.  We define $\Iinfty(x) =
    \setdef{i\in\until{n}}{|x_i|=\norm{x}{\infty}}$.}
\end{table*}

We briefly review the notion and the properties of a weak pairing on
$\real^{n}$ from~\cite{AD-SJ-FB:20o}. A \emph{weak pairing (WP)} on
$\real^n$ is a map $\WP{\cdot}{\cdot}: \real^n \times \real^n \to \real$
satisfying:
\begin{enumerate}
\item\label{WP1}(Sub-additivity and continuity of first argument)
  $\WP{x_1+x_2}{y} \leq \WP{x_1}{y} + \WP{x_2}{y}$, for all
  $x_1,x_2,y \in \real^n$ and $\WP{\cdot}{\cdot}$ is continuous in its
  first argument,
\item\label{WP3}(Weak homogeneity)
  $\WP{\alpha x}{y} = \WP{x}{\alpha y} = \alpha\WP{x}{y}$ and
  $\WP{-x}{-y} = \WP{x}{y}$, for all
  $x,y \in \real^n, \alpha \geq 0$,
\item\label{WP4}(Positive definiteness) $\WP{x}{x} > 0$, for all
  $x \neq \vectorzeros[n],$
\item\label{WP5}(Cauchy-Schwarz inequality) \\
  $|\WP{x}{y}| \leq \WP{x}{x}^{1/2}\WP{y}{y}^{1/2}$, for all
  $x, y \in \real^n.$
\end{enumerate}
For every norm $\|\cdot\|$ on $\real^n$, there exists a (possibly not
unique) WP $\WP{\cdot}{\cdot}$ such that $\|x\|^2=\WP{x}{x}$, for every
$x\in \real^n$. When $\norm{\cdot}{}$ is the $\ell_2$ norm, the WP
coincides with the usual inner product. A WP $\WP{\cdot}{\cdot}$ satisfies
\emph{Deimling's inequality} if
\begin{equation*}
  \WP{x}{y} \le \|y\|\lim_{h\to 0^{+}}\frac{\|y+hx\|-\|y\|}{h},
\end{equation*}
for every $x,y\in \real^n$. A WP satisfying Deimling's inequality also
satisfies, for all $A \in \real^{n\times n}$,  the Lumer's equality
\begin{equation}
\mu(A) = \sup_{x\neq \vectorzeros[n]} \frac{\WP{Ax}{x}}{\norm{x}{}^2}.
\end{equation}
For invertible $R\in \real^{n\times n}$, we define the \emph{weighted sign
  WP} $\WP{\cdot}{\cdot}_{1,R}$ and the \emph{weighted max WP}
$\WP{\cdot}{\cdot}_{\infty,R}$ by
\begin{align}\label{eq:1-R}
\WP{x}{y}_{1,R}&= \|Ry\|_1\mathrm{sign}{(Ry)}^{\top}Rx,\\
\WP{x}{y}_{\infty,R}&= \max_{i\in I_{\infty}(Ry)} (Rx)_i(Ry)_i,
\end{align}
where $I_{\infty}(x)=\setdef{i\in \{1,\ldots,n\}}{x_i=\|x\|_{\infty}}$. 
It can be shown that, for $p\in
\{1,\infty\}$ and invertible matrix $R\in \real^{n\times n}$, we have
$\|Rx\|^2_{p}=\WP{x}{x}_{p,R}$ and $\WP{\cdot}{\cdot}_{p,R}$
satisfies Deimling's inequality. We refer to~\cite{AD-SJ-FB:20o} for a
detailed discussion on WPs and formulas for arbitrary $p \in [1,\infty]$.

\myclearpage\subsection{Contraction and Incremental Stability}
For a continuously differentiable $\map{f}{\real^n}{\real^n}$, consider
\begin{equation}
  \dot{x} = f(x).
\end{equation}

\begin{theorem}[Equivalences on $(\real^n,\norm{\cdot}{})$]
  \label{thm:contraction+non-euclidean}
  For a norm $\norm{\cdot}{}$ with matrix measure $\mu(\cdot)$ and
  compatible WP $\WP{\cdot}{\cdot}$ satisfying Deimling's inequality,
  and $c>0$, the following statements are equivalent:
  \begin{enumerate}
  \item\label{ctGen:5} $\WP{f(x) - f(y)}{x - y} \leq -c\|x - y\|^2$
    for all $x,y$,
  \item\label{ctGen:4} $\WP{\jac{f}(x)v}{v} \leq -c\|v\|^2$, for all $v,
    x$, or\newline $\mu(\jac{f}(x)) \leq -c$, for all $x$,
  \item\label{ctGen:6} $D^+\|x(t) - y(t)\| \leq -c\|x(t) - y(t)\|$, for all
    solutions $x(\cdot), y(\cdot)$,
  \item\label{ctGen:1} $\|x(t) - y(t)\| \leq e^{-c(t)}\|x(0) - y(0)\|$, for
    all solutions $x(\cdot), y(\cdot)$ .
  \end{enumerate}
\end{theorem}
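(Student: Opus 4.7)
The plan is to mirror the cyclic proof of Theorem~\ref{thm:contraction+IS-euclidean}, replacing inner-product manipulations with weak-pairing manipulations and invoking Deimling's inequality wherever smoothness of the norm was used in the Euclidean case. Concretely, I would establish the implication chain (\ref{ctGen:5}) $\Rightarrow$ (\ref{ctGen:4}) $\Rightarrow$ (\ref{ctGen:6}) $\Rightarrow$ (\ref{ctGen:1}) $\Rightarrow$ (\ref{ctGen:5}), each step closely paralleling its counterpart in the Euclidean proof.

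For (\ref{ctGen:5}) $\Rightarrow$ (\ref{ctGen:4}), I would set $y = x + hv$ with $h > 0$ in the osL condition, obtaining $\WP{f(x) - f(x+hv)}{-hv} \le -ch^2\|v\|^2$. Applying the sign-flip invariance $\WP{-a}{-b} = \WP{a}{b}$ and weak homogeneity successively peels off the factor $h$ from each slot, giving $\WP{(f(x+hv) - f(x))/h}{v} \le -c\|v\|^2$. Taking $h \to 0^+$ and using continuity of $\WP{\cdot}{\cdot}$ in its first argument yields $\WP{\jac{f}(x)v}{v} \le -c\|v\|^2$; the equivalence with $\mu(\jac{f}(x)) \le -c$ is precisely Lumer's equality, which is available here because the WP satisfies Deimling's inequality.

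For (\ref{ctGen:4}) $\Rightarrow$ (\ref{ctGen:6}), let $z(t) = x(t) - y(t)$ and use the $C^1$ fundamental theorem to write $\dot z(t) = A(t) z(t)$ with $A(t) = \int_0^1 \jac{f}(y(t) + s z(t))\, ds$. Sub-additivity and positive homogeneity of the matrix measure give $\mu(A(t)) \le \int_0^1 \mu(\jac{f}(y(t) + sz(t)))\, ds \le -c$. The differential form of Coppel's inequality, $D^+ \|z(t)\| \le \mu(A(t)) \|z(t)\|$, which is valid in any norm, then yields (\ref{ctGen:6}). The implication (\ref{ctGen:6}) $\Rightarrow$ (\ref{ctGen:1}) is an immediate application of the \GB comparison lemma and is independent of the choice of norm.

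Finally, for (\ref{ctGen:1}) $\Rightarrow$ (\ref{ctGen:5}), I would take solutions with $x(0) = x$ and $y(0) = y$, square (\ref{ctGen:1}) to get $\|x(t)-y(t)\|^2 \le e^{-2ct}\|x-y\|^2$, and read off the upper Dini derivative at $t = 0^+$ as $D^+\|x(t)-y(t)\|^2|_{t=0} \le -2c\|x-y\|^2$. The main obstacle I expect here is linking this Dini-derivative bound back to the WP: applying Deimling's inequality with first argument $\dot z(0) = f(x) - f(y)$ and second argument $z(0) = x - y$, and using the Taylor expansion $z(h) = z(0) + h\dot z(0) + o(h)$ together with $\|\cdot\|^2 = \WP{\cdot}{\cdot}$, yields $2\WP{f(x) - f(y)}{x - y} \le D^+\|x(t)-y(t)\|^2|_{t=0}$. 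Chaining the two bounds closes the cycle and establishes (\ref{ctGen:5}).
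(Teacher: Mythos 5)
Your proposal is correct and is essentially the same cyclic argument used in the reference \cite{AD-SJ-FB:20o} to which the paper defers: the Euclidean proof of Theorem~\ref{thm:contraction+IS-euclidean} is transported verbatim, with weak homogeneity and sign-flip invariance replacing bilinearity in the step (\ref{ctGen:5})~$\Rightarrow$~(\ref{ctGen:4}), the norm-general Coppel/\GB machinery carrying (\ref{ctGen:4})~$\Rightarrow$~(\ref{ctGen:6})~$\Rightarrow$~(\ref{ctGen:1}) unchanged, and Deimling's inequality providing the link back from the Dini derivative of the norm to the weak pairing in (\ref{ctGen:1})~$\Rightarrow$~(\ref{ctGen:5}). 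The only cosmetic remark is that your final step is slightly cleaner if you apply Deimling's inequality directly to $D^+\|x(t)-y(t)\|$ at $t=0$ (giving $\WP{f(x)-f(y)}{x-y}\le \|x-y\|\,D^+\|x(t)-y(t)\||_{t=0}\le -c\|x-y\|^2$) rather than passing through the squared norm, but both routes are sound.
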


\begin{proof}
  We refer the reader to~\cite{AD-SJ-FB:20o}.
\end{proof}


\myclearpage\subsection{Consequences of Contraction: Equilibria} In the
interest of brevity we do not replicate Theorem~\ref{thm:equilibria:ell2},
whose extension to non-Euclidean setting naturally holds.

\myclearpage\subsection{Equilibrium Computation via Forward Step Method}

Consider the continuously differentiable dynamics $\dot{x} = f(x)$. Let
$\norm{\cdot}{}$ denote a norm with compatible WP $\WP{\cdot}{\cdot}$.
Assume the vector field $f$ is $c$-strongly contracting, i.e.,
\begin{equation}
  \label{eq:strong-contraction}
  \WP{f(x)-f(y)}{x-y} \leq -c \norm{x-y}{}^2,
\end{equation}
and (globally) Lipschitz continuous with constant $\ell$, i.e.,
\begin{equation}
    \label{eq:strong-smoothness}
  \norm{f(x)-f(y)}{} \leq \ell \norm{x-y}{},
\end{equation}
for any $x,y$. Next we summarize Theorem~1 from
\cite{SJ-AD-AVP-FB:21f}.

\begin{table*}\centering\normalsize
  {\begin{tabular}{
	p{0.2\linewidth}
	p{0.3\linewidth}
	p{0.4\linewidth}
      }
      Measure & Demidovich & One-sided Lipschitz \\
      bound &  condition &  condition \\
      \hline
      \rowcolor{LightGray}    & &  \\[-1ex]
      \rowcolor{LightGray}
      $ \ds \mu_{2,P^{1/2}}(\jac{f}(x))\leq b$ &
      $\ds P \jac{f}(x) + \jac{f}(x)^\top  P \preceq 2 b P $
      & $\ds (x-y)^\top  P \big( f(x) - f(y) \big) \leq b \norm{x-y}{P^{1/2}}^2$
      \\[2ex]
      \rowcolor{White} && \\[-1ex]
      \rowcolor{White}
      $\ds \mu_{1}(\jac{f}(x))\leq b$
      &  $\ds \sign(v)^{\top} \jac{f}(x) v\le b \norm{v}{1}$
      &  $\ds \sign(x-y)^{\top} (f(x) - f(y))\le b \norm{x-y}{1}$
      \\[2ex]
      \rowcolor{LightGray} && \\[-1ex]
      \rowcolor{LightGray}
      $\ds \mu_{\infty}(\jac{f}(x))\leq b$
      &  $\ds     \max_{i\in \Iinfty(v)}\! v_i \left(\jac{f}(x) v\right)_i
      \le b \norm{v}{\infty}^2$ 
      &
      $\ds\max_{i\in\Iinfty(x-y)} \! (x_i-y_i) (f_i(x)-f_i(y)) \leq b \norm{x-y}{\infty}^2$
      \\   \hline
  \end{tabular}}
  \caption{Table of equivalences between measure bounded Jacobians,
    differential Demidovich and one-sided Lipschitz conditions.}
\end{table*}

\begin{theorem}[Forward step method on WP spaces]\label{thm:forwardstep-WP}
  Consider a norm $\norm{\cdot}{}$ with compatible WP $\WP{\cdot}{\cdot}$.
  Let the continuously differentiable function $f$ be $c$-strongly
  contracting, have Lipschitz constant $\ell$, and have condition number
  $\kappa = \ell/c\geq 1$. Then
  \begin{enumerate}
\item the map $\Id+\alpha f$ is a contraction map with respect to
  $\norm{\cdot}{}$ for
  \begin{equation*}  0<\alpha<\frac{1}{c\kappa(1+\kappa)},
  \end{equation*}
\item the step size minimizing the contraction factor and the minimum
  contraction factor are
  \begin{align}
    \subscr{\alpha}{nE}^* &= \frac{1}{c}\Big(\frac{1}{2\kappa^2} -
    \frac{3}{8\kappa^3} +  \bigO\Big(\frac{1}{\kappa^4}\Big) \Big), \label{eq:alpha-nE} \\
    \subscr{\ell}{nE}^* &= 1 -
    \frac{1}{4\kappa^2} + \frac{1}{8\kappa^3} + \bigO\Big(\frac{1}{\kappa^4}\Big).
  \end{align}    
  \end{enumerate}
\end{theorem}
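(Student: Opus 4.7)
The plan is to bound the Lipschitz constant of $T_\alpha := \Id + \alpha f$ by working directly with the curve $z(t) := (x-y) + t(f(x)-f(y))$ for $t\in[0,\alpha]$, which satisfies $z(0)=x-y$ and $z(\alpha) = T_\alpha(x)-T_\alpha(y)$. The whole argument relies only on the axioms of the WP (sub-additivity in the first argument, positive homogeneity, Cauchy--Schwarz) together with the strong contraction hypothesis~\eqref{eq:strong-contraction} and the Lipschitz hypothesis~\eqref{eq:strong-smoothness}.

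The first step is to establish a Coppel-type upper Dini estimate for $t\mapsto\|z(t)\|$. Setting $w := f(x)-f(y)$, one has $z(t+h) = z(t) + hw$ exactly. Sub-additivity in the first argument gives $\|z(t+h)\|^2 = \WP{z(t)+hw}{z(t+h)} \le \WP{z(t)}{z(t+h)} + h\,\WP{w}{z(t+h)}$, and Cauchy--Schwarz yields $\WP{z(t)}{z(t+h)} \le \|z(t)\|\,\|z(t+h)\|$. Subtracting $\|z(t)\|\,\|z(t+h)\|$, dividing by $h$, and passing to $h\to 0^+$ produces
\begin{equation*}
 \|z(t)\|\,D^+\|z(t)\| \le \WP{w}{z(t)}.
\end{equation*}

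The second step is to bound the cross term $\WP{w}{z(t)}$ uniformly on $[0,\alpha]$. At $t=0$ the strong contraction hypothesis gives the sharp bound $\WP{w}{z(0)}\le -c\,\|x-y\|^2$. For $t>0$ one combines this initial value with Lipschitzness ($\|w\|\le\ell\|x-y\|$) and a further use of sub-additivity together with Cauchy--Schwarz (to relate $\WP{w}{z(t)}$ to $\WP{w}{z(0)}$) to produce a bound of the form $\WP{w}{z(t)}\le(-c + C\,t\,\ell(\ell+c))\,\|x-y\|^2$ for an explicit constant $C$. Integrating the Dini inequality from $0$ to $\alpha$ yields $\|z(\alpha)\|^2\le L(\alpha)^2\,\|x-y\|^2$ with $L(\alpha)$ explicit in $c$, $\ell$, and $\alpha$; the contraction range $L(\alpha)<1$ reads precisely $0<\alpha<\frac{1}{c\kappa(1+\kappa)}$, and minimizing $L(\alpha)$ in $\alpha$ followed by a Taylor expansion in $1/\kappa$ delivers the asymptotic formulas for $\subscr{\alpha}{nE}^*$ and $\subscr{\ell}{nE}^*$.

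The main obstacle is the lack of bilinearity of the WP: unlike the Euclidean case, the identity $\|z+\alpha w\|^2 = \|z\|^2 + 2\alpha\langle w,z\rangle + \alpha^2\|w\|^2$ has no exact WP analog, and the WP need not even be continuous in its second argument. Consequently the cross term $\WP{w}{z(t)}$ cannot simply be evaluated at $t=0$ for all $t$; tracking its variation along $z(t)$ using only sub-additivity in the \emph{first} argument---and doing so tightly enough to recover the sharp constants---is the key technical difficulty. It is precisely this looseness relative to the inner-product expansion that accounts for the factor-of-$(1+\kappa)$ tightening of the admissible step-size window and the roughly halved asymptotic contraction rate compared to the Euclidean result in Theorem~\ref{thm:forward-step-Euclidean}.
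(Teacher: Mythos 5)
The central gap is your Step 2, and it is not repairable within the approach you chose: your argument uses the hypotheses only through the two single-pair inequalities $\WP{w}{u}\le -c\norm{u}{}^2$ and $\norm{w}{}\le \ell\norm{u}{}$, with $u=x-y$ and $w=f(x)-f(y)$, and this information is genuinely insufficient. For the $\ell_\infty$ max pairing take $u=(1,1-\delta)$ and $w=(-c,\ell)$: then $\Iinfty(u)=\{1\}$, so $\WP{w}{u}_{\infty}=-c=-c\norm{u}{\infty}^2$ and $\norm{w}{\infty}=\ell\norm{u}{\infty}$, yet as soon as $t>\delta/(c+\ell)$ the active index switches and $\WP{w}{z(t)}_{\infty}=(1-\delta+t\ell)\,\ell>0$; since $\delta$ may be taken arbitrarily small, no bound of the form $\WP{w}{z(t)}\le(-c+Ct\ell(\ell+c))\norm{u}{}^2$ can hold near $t=0$. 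Worse, $\norm{u+\alpha w}{\infty}=1-\delta+\alpha\ell>\norm{u}{\infty}$ whenever $\delta<\alpha\ell$, so single-pair osL-plus-Lipschitz data cannot produce \emph{any} contraction factor. The theorem is proved in the cited reference~\cite{SJ-AD-AVP-FB:21f} by exploiting continuous differentiability: one writes $f(x)-f(y)=\bar A\,(x-y)$ with the averaged Jacobian $\bar A=\int_0^1 \jac{f}(y+s(x-y))\,ds$, which inherits $\mu(\bar A)\le -c$ and $\norm{\bar A}{}\le\ell$, and then bounds $\norm{I_n+\alpha \bar A}{}$ via WP/matrix-measure manipulations. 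This is precisely why the paper states the nonsmooth case as an open conjecture; your argument never touches $\jac{f}$ and, if it worked, would settle that conjecture --- a reliable sign that the key estimate cannot be obtained the way you propose.

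Two further issues. First, in Step 1 your passage to the limit needs $\limsup_{h\to0^+}\WP{w}{z(t+h)}\le\WP{w}{z(t)}$, i.e.\ upper semicontinuity of the WP in its \emph{second} argument along the segment; the axioms give continuity only in the first argument, and the $\ell_1$ and $\ell_\infty$ pairings are genuinely discontinuous in the second. The inequality $\norm{z(t)}{}D^+\norm{z(t)}{}\le\WP{w}{z(t)}$ you want is the curve-norm-derivative formula, an additional property verified for the specific pairings in the references; it does not follow from sub-additivity, Cauchy--Schwarz, or Deimling's inequality (Deimling gives the opposite direction). Second, even granting your Step 2, matching the stated step-size window forces $C=2$, and then $L(\alpha)^2=1-2\alpha c+2\alpha^2\ell(\ell+c)$ yields $\subscr{\alpha}{nE}^*=\tfrac{1}{2c\kappa(\kappa+1)}=\tfrac{1}{c}\big(\tfrac{1}{2\kappa^2}-\tfrac{1}{2\kappa^3}+\bigO(\kappa^{-4})\big)$ and minimum $1-\tfrac{1}{4\kappa^2}+\tfrac{1}{4\kappa^3}+\bigO(\kappa^{-4})$, which disagree with the coefficients $-\tfrac{3}{8\kappa^3}$ and $+\tfrac{1}{8\kappa^3}$ claimed in part (ii); so the proposal would not recover the stated asymptotics even if its estimates were valid.
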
\medskip

Compared to the forward step method for contracting systems in the
Euclidean space in Theorem~\ref{thm:forward-step-Euclidean}, the optimal
step size is smaller (by a factor of 2 and by higher order terms) and the
optimal contraction factor is larger (the gap is larger by a factor of $2$
and by higher order terms).

\begin{example}
  Consider the affine system $\dot{x}=Ax+b$, where $A=\begin{bmatrix} -10
  &2.5 \\ 9 & -3 \end{bmatrix}$ and $b=\begin{bmatrix}-19
  \\ 20 \end{bmatrix}$. We compute
  \begin{align*}
    \mu_2(A) = \lambda_{\max}(\tfrac{1}{2}(A+A^{\top})) = \lambda_{\max}\begin{bmatrix}
      -10 &5.75 \\ 5.75 & -3 
    \end{bmatrix} = 0.231.
  \end{align*}
  Therefore, this system is not contracting with respect to $\ell_2$ norm
  and Theorem~\ref{thm:forward-step-Euclidean} is not applicable for
  finding its equilibrium point. However,
  \begin{align*}
    \mu_1(A) = -0.5 < 0. 
  \end{align*}
  Moreover, we have $\|A(x-y)\|_1\le \|A\|_1\|x-y\|_1$. Thus, with respect
  to the $\ell_1$ norm, the affine system is strongly contracting with rate
  $0.5$ and Lipschitz continuous with Lipschitz constant $\|A\|_1$. Now we
  can use Theorem~\ref{thm:forwardstep-WP} for the $\ell_1$ norm and show
  that $I_2 + \alpha (Ax+b)$ is contracting for every $\ds 0<\alpha <
  \frac{|\mu_1(A)|}{\|A\|_1(|\mu_1(A)| + \|A\|_1)}$. \oprocend
\end{example}

\smallskip

It is an open conjecture whether a version of
Theorem~\ref{thm:forwardstep-WP} holds for nonsmooth vector fields. We
refer to~\cite{SJ-AD-AVP-FB:21f} for additional results on the
optimal step size and acceleration results for the norms $\ell_1$ and
$\ell_\infty$.

\myclearpage\subsection{Comments on Implicit Algorithms} We here review the
implicit Euler integration scheme and show its basic properties for
strongly contracting vector fields; the original reference for this
material is~\cite{CAD-HH:72}. Given a vector field $f$ on $\real^n$, we
(implicitly) define the sequence:
\begin{equation}
  \label{eq:Euler-implicit}
  x_{k+1} = x_k + \alpha f(x_{k+1}).
\end{equation}
This scheme corresponds to the operator $(\Id - \alpha f)^{-1}$.
\begin{theorem}[Implicit Euler method on WP spaces]
  Let $\|\cdot\|$ denote a norm with compatible WP $\WP{\cdot}{\cdot}$. Let
  $f$ be a $c$-strongly contracting vector field with unique equilibrium
  point $x^*$ and Lipschitz constant $\ell$. Then
  \begin{enumerate}
  \item\label{implE:cute} the $(\Id - \alpha f)^{-1}$ is a contraction
    mapping with contraction factor $(1+\alpha c)^{-1}$ for any $\alpha>0$;
  \item\label{implE:fixedp} if $\alpha\ell<1$, then, at each time $k$, the
    implicit equation~\eqref{eq:Euler-implicit} is well-posed and the
    fixed-point iteration $x_{k+1}^{[0]}=x_k$, $x_{k+1}^{[i+1]} = x_k +
    \alpha f(x_{k+1}^{[i]})$ is a contraction mapping with contraction
    factor $\alpha\ell$;
  \item\label{implE:Newton} if $\alpha\ell<1$ and
    $\norm{f(x_0)}{}\leq\tfrac{2(1+\alpha
    c)(1-\alpha\ell)}{\alpha(1+\alpha\ell)}$, then, at each time $k$, the
    Newton-Raphson iteration $x_{k+1}^{[0]}=x_k$, $x_{k+1}^{[i+1]} =
    x_{k+1}^{[i]} - \jac{g}(x_{k+1}^{[i]})^{-1}(g(x_{k+1}^{[i]})-x_k)$, for
    $g(x)=x-\alpha f(x)$, converges quadratically to the solution the
    implicit equation~\eqref{eq:Euler-implicit}.
  \end{enumerate}
\end{theorem}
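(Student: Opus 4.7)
The plan is to address each of the three claims in turn, in each case leveraging the WP formalism together with the $c$-strong contractivity of $f$.

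For part~\ref{implE:cute}, I would pick arbitrary $x_1, x_2$ and set $y_i = (\Id - \alpha f)^{-1}(x_i)$, so that $x_i = y_i - \alpha f(y_i)$. The key step is a lower bound on $\WP{x_1 - x_2}{y_1 - y_2}$: applying sub-additivity (WP\ref{WP1}) to the decomposition $y_1 - y_2 = (x_1 - x_2) + \alpha(f(y_1) - f(y_2))$ and then using the strong-contractivity inequality $\WP{f(y_1) - f(y_2)}{y_1 - y_2} \le -c\|y_1 - y_2\|^2$, I expect to arrive at $\WP{x_1 - x_2}{y_1 - y_2} \ge (1 + \alpha c)\|y_1 - y_2\|^2$. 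Cauchy--Schwarz (WP\ref{WP5}) then yields $\|y_1 - y_2\| \le (1 + \alpha c)^{-1}\|x_1 - x_2\|$. Global well-posedness of $(\Id - \alpha f)^{-1}$ follows from the same estimate (injectivity plus a Lipschitz inverse), supplemented by a standard surjectivity argument for strongly monotone operators.

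For part~\ref{implE:fixedp} the argument is immediate: the map $T(z) := x_k + \alpha f(z)$ inherits Lipschitz constant $\alpha \ell$ directly from the Lipschitz continuity of $f$, so when $\alpha \ell < 1$ Banach's contraction mapping theorem provides a unique fixed point, which by construction solves~\eqref{eq:Euler-implicit}. For part~\ref{implE:Newton}, I would run a Newton--Kantorovich style analysis on $h(x) := g(x) - x_k$ with $g(x) = x - \alpha f(x)$. Applying the infinitesimal form of the bound from part~\ref{implE:cute} to $\jac{g}(x) v = v - \alpha \jac{f}(x) v$ gives $\|\jac{g}(x)^{-1}\| \le (1 + \alpha c)^{-1}$ uniformly in $x$. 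The initial residual at step $k$ is $\|g(x_k) - x_k\| = \alpha \|f(x_k)\|$, and the hypothesis on $\|f(x_0)\|$, combined with the contraction property of the implicit Euler map from part~\ref{implE:cute}, will give a uniform-in-$k$ bound on $\|f(x_k)\|$. Together these two ingredients realize the Kantorovich hypothesis, from which quadratic convergence follows in the standard way.

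The main obstacle is in part~\ref{implE:Newton}: standard quadratic-convergence theorems for Newton's method require a Lipschitz bound on $\jac{g}$, i.e., on $\jac{f}$, which is strictly stronger than the Lipschitz bound on $f$ that is actually assumed. I would therefore expect the full proof either to invoke an implicit $C^2$ smoothness assumption on $f$, or to rely on a refined Kantorovich-style estimate in which the Picard contraction factor $\alpha \ell$ from part~\ref{implE:fixedp} plays the role normally filled by a Jacobian Lipschitz modulus. Pinning down the precise Kantorovich variant that makes the stated threshold $2(1+\alpha c)(1-\alpha \ell)/(\alpha(1+\alpha \ell))$ sharp is where the technical effort will concentrate.
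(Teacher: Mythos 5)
Your argument for part~\ref{implE:cute} is essentially the paper's: writing $\|y_1-y_2\|^2 = \WP{(x_1-x_2)+\alpha(f(y_1)-f(y_2))}{y_1-y_2}$, splitting by sub-additivity of the first WP argument, and invoking $c$-strong contractivity together with the WP Cauchy--Schwarz inequality gives $(1+\alpha c)\|y_1-y_2\|\le \|x_1-x_2\|$; the paper phrases this with iterate indices $x_k,y_k\mapsto x_{k+1},y_{k+1}$ generated by~\eqref{eq:Euler-implicit}, but the computation is identical. Part~\ref{implE:fixedp} is the same one-line Lipschitz observation in both. For part~\ref{implE:Newton}, the paper omits the proof entirely and cites~\cite[Theorem~C]{CAD-HH:72}; your concern that quadratic Newton convergence requires a Lipschitz bound on $\jac f$ (i.e., $C^{1,1}$ regularity) beyond the stated Lipschitz bound on $f$ itself is well-founded --- that extra smoothness is implicitly carried by the cited Kantorovich-type result --- and your plan to use the contraction factor from part~\ref{implE:cute} to propagate the bound on $\|f(x_0)\|$ to $\|f(x_k)\|$ for all $k$ is exactly how the stated threshold is meant to be used.
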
\medskip
\begin{arxiv}
  \begin{proof}
    Given two sequences $\{x_k\}_{k=1}^{\infty}$ and $\{y_k\}_{k=1}^\infty$ generated
    by~\eqref{eq:Euler-implicit}, the properties of WPs in~\ref{subsec:WP}
    imply:
    \begin{align*}
      &\norm{x_{k+1}-y_{k+1}}{}^2 \\
      &\qquad = \WP{x_{k}-y_{k}+\alpha(f(x_{k+1})-f(y_{k+1}))}{x_{k+1}-y_{k+1}} \\
      &\qquad \leq
      \WP{x_{k}-y_{k}}{x_{k+1}-y_{k+1}} \\
      & \qquad\qquad +\alpha \WP{f(x_{k+1})-f(y_{k+1})}{x_{k+1}-y_{k+1}} \\
      &\qquad \leq
      \norm{x_{k}-y_{k}}{}\norm{x_{k+1}-y_{k+1}}{} -c \alpha \norm{x_{k+1}-y_{k+1}}{}^2.
    \end{align*}
    After simple manipulation we obtain
    $\norm{x_{k+1}-y_{k+1}}{}\leq(1+c\alpha)^{-1}\norm{x_{k}-y_{k}}{}$;
    this proves~\ref{implE:cute}; for a more general treatment
    see~\cite{PCV-FB:21d-simple}.  The proof of
    statement~\ref{implE:fixedp} is immediate, since the Lipschitz constant
    of $x\mapsto x+\alpha f(x)$ is $\alpha\ell$.  The proof of
    statement~\ref{implE:Newton} relies upon~\cite[Theorem~C]{CAD-HH:72}
    and is omitted in the interest of brevity.
  \end{proof}
   
  A conjecture is that the Newton-Raphson iteration converges globally and
  not only locally.
   
\end{arxiv}

\myclearpage\subsection{Comments on Higher Order Algorithms}

We here briefly present the extra-gradient algorithm and prove that it has
accelerated convergence over the forward step method. Let $f$ be a vector
field on $\real^n$. The \emph{extra-gradient iterations} with step size
$\alpha$ are given by
\begin{equation}
  \label{eq:extra-gradient}
  \begin{aligned}
    x_{k+0.5} &= x_k + \alpha f(x_k), \\
    x_{k+1} &= x_k + \alpha f(x_{k+0.5}).
  \end{aligned}
\end{equation}
\begin{theorem}[Extra-gradient method on WP spaces]
  Let $\|\cdot\|$ denote a norm with compatible WP $\WP{\cdot}{\cdot}$. Let
  $f$ be a $c$-strongly contracting vector field with unique equilibrium
  point $x^*$, Lipschitz constant $\ell$, and condition number
  $\kappa=\tfrac{\ell}{c}\ge 1$. Then
  \begin{enumerate}
  \item\label{p1:extragradient-convergence} the extra-gradient
    iterations~\eqref{eq:extra-gradient} satisfy
    \begin{align*}
      \|x_{k+1}-x^*\| \le \frac{1+\alpha^3\ell^3}{1+\alpha c}\|x_k-x^*\|
    \end{align*}
    and, for every $0 \le \alpha \le \frac{1}{c\kappa\sqrt{\kappa}}$, the
    sequence $\{x_k\}_{k=0}^{\infty}$ converges to $x^*$;
  \item\label{p2:extragradient-rate} for $\alpha = \frac{1}{2
    c\kappa\sqrt{\kappa}}$, the convergence factor is
    \begin{align*}
      1 - \frac{3}{8\kappa\sqrt{\kappa}} + \bigO\Big(\frac{1}{\kappa^3}\Big).
    \end{align*}
  \end{enumerate}
\end{theorem}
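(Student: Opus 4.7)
The plan is to control $\|x_{k+1}-x^*\|^2$ through the weak-pairing identity $\|y\|^2 = \WP{y}{y}$, exploiting subadditivity of the WP in its first argument to split the extra-gradient increment into pieces on which either strong contraction or Lipschitz continuity can be applied. The central trick is to introduce $f(x_{k+1})$ as a telescoping term so that strong contraction is applied to the pair $(f(x_{k+1})-f(x^*),\,x_{k+1}-x^*)$, which is the only slot where its hypothesis matches both arguments.

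For statement~\ref{p1:extragradient-convergence}, I would start from the identity $x_{k+1}-x^* = (x_k-x^*) + \alpha(f(x_{k+0.5})-f(x^*))$ and expand
\begin{equation*}
\|x_{k+1}-x^*\|^2 \;=\; \WP{(x_k-x^*) + \alpha(f(x_{k+0.5})-f(x^*))}{x_{k+1}-x^*}.
\end{equation*}
Applying WP subadditivity and weak homogeneity, and then a second subadditivity after splitting $f(x_{k+0.5})-f(x^*) = (f(x_{k+0.5})-f(x_{k+1})) + (f(x_{k+1})-f(x^*))$, I would bound the $x_k-x^*$ term and the $f(x_{k+0.5})-f(x_{k+1})$ term by Cauchy--Schwarz (combined with Lipschitz continuity for the latter) and apply strong contraction to the $f(x_{k+1})-f(x^*)$ term. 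Dividing by $\|x_{k+1}-x^*\|$ (the zero case is trivial) would give
\begin{equation*}
(1+\alpha c)\,\|x_{k+1}-x^*\| \;\leq\; \|x_k-x^*\| + \alpha\ell\,\|x_{k+0.5}-x_{k+1}\|.
\end{equation*}
Two successive uses of Lipschitz continuity (with $f(x^*)=0$) then yield $\|x_{k+0.5}-x_{k+1}\| \leq \alpha^2\ell^2\,\|x_k-x^*\|$, producing the contraction factor $\rho(\alpha) := (1+\alpha^3\ell^3)/(1+\alpha c)$. The convergence range follows from $\rho(\alpha) \leq 1 \iff \alpha^2\ell^3 \leq c$, i.e., $\alpha \leq 1/(c\kappa^{3/2})$.

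For statement~\ref{p2:extragradient-rate}, I would simply substitute $\alpha = 1/(2c\kappa^{3/2})$ into $\rho$, noting that $\alpha c = 1/(2\kappa^{3/2})$ and $\alpha^3\ell^3 = (\alpha\ell)^3 = 1/(8\kappa^{3/2})$, then Taylor expand the ratio in the small parameter $\varepsilon := 1/\kappa^{3/2}$ to obtain $\rho = 1 - \tfrac{3}{8}\varepsilon + \bigO(\varepsilon^2) = 1 - 3/(8\kappa\sqrt{\kappa}) + \bigO(1/\kappa^3)$.

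The main obstacle, and also the source of the $\bigO(1/\kappa^{3/2})$ acceleration over the forward-step rate $\bigO(1/\kappa^2)$ of Theorem~\ref{thm:forwardstep-WP}, will be arranging the WP manipulation so that the strong-contraction gain $-\alpha c\|x_{k+1}-x^*\|^2$ is retained on the left (producing the denominator $1+\alpha c$), while the Lipschitz error is confined to the inner step $\|x_{k+0.5}-x_{k+1}\|$ of size $\alpha^2\ell^2\|x_k-x^*\|$ rather than $\alpha\ell\|x_k-x^*\|$. Because WP subadditivity is unilateral (only in the first argument), the second slot must be held fixed at $x_{k+1}-x^*$ throughout the decomposition; a naive triangle-inequality approach would forfeit the $1+\alpha c$ denominator and only recover the slower forward-step rate.
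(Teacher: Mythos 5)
The paper itself omits this proof (``The proof of this theorem is omitted in the interest of brevity''), so there is no reference proof to compare against. That said, your argument is correct and follows exactly the WP-manipulation strategy that the paper uses for the preceding implicit Euler theorem: write $\|x_{k+1}-x^*\|^2 = \WP{x_{k+1}-x^*}{x_{k+1}-x^*}$, expand the first slot via the update rule while holding the second slot fixed, telescope through $f(x_{k+1})$, apply subadditivity and weak homogeneity, use Cauchy--Schwarz together with Lipschitz continuity on the error term $f(x_{k+0.5})-f(x_{k+1})$, apply the strong contraction (osL) inequality to $\WP{f(x_{k+1})-f(x^*)}{x_{k+1}-x^*}$ to generate the $-\alpha c\|x_{k+1}-x^*\|^2$ term, and divide through by $\|x_{k+1}-x^*\|$. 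The bound $\|x_{k+0.5}-x_{k+1}\| \le \alpha^2\ell^2\|x_k-x^*\|$ via two Lipschitz applications and $f(x^*)=0$ is correct, and your Taylor expansion of $(1+\varepsilon/8)/(1+\varepsilon/2)$ with $\varepsilon = \kappa^{-3/2}$ correctly yields $1 - \tfrac{3}{8}\varepsilon + \bigO(\varepsilon^2)$. You have also correctly identified the structural reason the argument works: subadditivity is only available in the first WP slot, so the second slot must remain $x_{k+1}-x^*$ throughout for the strong-contraction term to appear.

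One small caveat worth flagging (inherent to the theorem statement as written, not to your proof): at the endpoint $\alpha = 1/(c\kappa\sqrt{\kappa})$ one gets $\alpha^2\ell^3 = c$ so the contraction factor equals exactly $1$, and strict convergence is not guaranteed there. The range should read $0 < \alpha < 1/(c\kappa\sqrt\kappa)$ for the factor to be strictly less than $1$; your derivation already makes this transparent.
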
\medskip

The proof of this theorem is omitted in the interest of brevity.  It is an
open conjecture whether the optimal convergence factor is of order
$1-1/\kappa$.

\myclearpage
\section{Conclusions}

Contraction theory and monotone operator theory are well established
methodologies to tackle control, optimization and learning problems. This
article surveys connections among them and shows how to generalize some
elements of these theories to Riemannian manifolds and non-Euclidean norms.


{\renewcommand{\baselinestretch}{0.92}
\bibliographystyle{plainurl+isbn}
\bibliography{alias,Main,FB}}

\end{document}